\newtheorem{theorem}{Theorem}
\theoremstyle{definition}
\newtheorem{definition}{Definition}
\newlist{thmRoman}{enumerate}{1}
\setlist[thmRoman]{%
  label=\normalfont
  \small(\Roman*), 
  ref=(\Roman*),                                    
  font=\normalfont
  \small,           
  align=left, leftmargin=*,                         
}
\newcommand{\al}{\alpha}
\renewcommand{\Psi}{\overline{\Phi}}
\newcommand{\R}{\mathbb{R}}
\newcommand{\C}{\mathbb{C}}
\newcommand{\F}
{\Sigma}
\renewcommand{\le}{\leqslant}
\renewcommand{\ge}{\geqslant}
\newcommand\oa{\mathsf{oa}}
\newcommand\ta{\mathsf{ta}}
\newcommand\ua{\mathsf{ua}}
\begin{document}

\title
{On the sum of the angles between three vectors
}


\author{Iosif Pinelis}

\address{Department of Mathematical Sciences\\
Michigan Technological University\\
Houghton, Michigan 49931, USA\\
}

%



\date{5 August 2015}                               

\keywords{
Angle measures, sum of the angles of a triangle, betweenness}

\subjclass{
00A05, 01A20, 51A15, 51A20, 51A25, 51F99, 51G05, 51L99, 51M04, 51M05, 51M16, 51N10}
        



\begin{abstract}
For any three nonzero vectors $a,b,c$ in $\R^2$, we obtain a necessary and sufficient condition for the sum 
of the three pairwise angles between these vectors to equal $2\pi$. 
As an easy consequence of this, a proof of Euclid's theorem that the sum of the interior angles of any triangle is $\pi$ is provided. So, the main result of this note can be considered a generalization of Euclid's theorem. 
To a large extent, the consideration is reduced almost immediately to a choice for the sum of three related angles among the three integer multiples $0,2\pi,4\pi$ of $\pi$. The rest of the consideration concerns only various betweenness relations. 
\end{abstract}

\maketitle







\section{Definitions of relevant notions, their properties, and main results}\label{result}

Let us begin with the definitions of several kinds of angles or, more exactly, angle measures. 

Let $a,b,c$ be nonzero vectors in $\R^2$. We identify $\R^2$ with the set $\C$ of all complex numbers. 

\begin{definition}\label{def:oa}
The \emph{oriented angle} $\oa_{a,b}$ from $a$ to $b$ is defined as the value in $[0,2\pi)$ of the argument of the ratio $b/a$. That is, $\oa_{a,b}=t$ if $b/a=re^{it}$ for some real $r>0$ and some real $t\in[0,2\pi)$. 
\end{definition}

\begin{definition}\label{def:ta}
The \emph{turning angle} $\ta_{a,b}$ from $a$ to $b$ is defined as \break  $\min(\oa_{a,b},2\pi-\oa_{a,b})$.  
\end{definition}

\begin{definition}\label{def:ua}
The \emph{usual angle} $\ua_{a,b}$ from $a$ to $b$ is defined as 
$\arccos\frac{a\cdot b}{|a|\,|b|}$, where $a\cdot b$ is the dot product of $a$ and $b$.  
\end{definition}

The following is the main result of this note. 

\begin{theorem}\label{th:ta}
$\ta_{a,b}+\ta_{b,c}+\ta_{c,a}=2\pi$ if and only if one of the following two  alternatives holds: 
\begin{thmRoman}
	\item\label{alt1} $0<\max(\oa_{a,b},\oa_{b,c},\oa_{c,a})\le\pi$;
	\item\label{alt2} $\min(\oa_{a,b},\oa_{b,c},\oa_{c,a})\ge\pi$. 
\end{thmRoman}	 
\end{theorem}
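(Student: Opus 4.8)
The plan is to reduce the whole problem to the single observation that the sum $S:=\oa_{a,b}+\oa_{b,c}+\oa_{c,a}$ can take only three values, and then to decide in each case, by elementary betweenness arguments, which configurations make the turning-angle sum $T:=\ta_{a,b}+\ta_{b,c}+\ta_{c,a}$ equal to $2\pi$. The key algebraic fact is that $\frac ba\cdot\frac cb\cdot\frac ac=1$, so the arguments of the three ratios sum to an integer multiple of $2\pi$; since each oriented angle lies in $[0,2\pi)$ by Definition~\ref{def:oa}, the sum $S$ lies in $[0,6\pi)$ and therefore $S\in\{0,2\pi,4\pi\}$. This is the announced reduction to a choice among $0,2\pi,4\pi$. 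I would also record the piecewise form of the turning angle coming from Definition~\ref{def:ta}: $\ta_{a,b}=\oa_{a,b}$ when $\oa_{a,b}\le\pi$ and $\ta_{a,b}=2\pi-\oa_{a,b}$ when $\oa_{a,b}\ge\pi$, the two formulas agreeing at the boundary value $\pi$, and similarly for the other two pairs.

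I would then split according to the value of $S$, writing $\al,\be,\ga$ for the three oriented angles. If $S=0$ then $\al=\be=\ga=0$, so $T=0\ne2\pi$, while neither \ref{alt1} (the maximum is $0$) nor \ref{alt2} (the minimum is $0$) holds, so both sides of the equivalence are false. If $S=2\pi$, the betweenness observation is that at most one of $\al,\be,\ga$ can exceed $\pi$, since two angles each exceeding $\pi$ would already sum to more than $S$. When all three are $\le\pi$ one gets $T=\al+\be+\ga=2\pi$ and \ref{alt1} holds, the maximum being positive because $S>0$; when, say, $\al>\pi$, the other two sum to $2\pi-\al<\pi$, are therefore each $<\pi$, and $T=(2\pi-\al)+\be+\ga=4\pi-2\al\in(0,2\pi)$, so $T\ne2\pi$ and \ref{alt1} fails. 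Hence $T=2\pi\iff\ref{alt1}$ when $S=2\pi$, and \ref{alt2} is impossible here since it would force $S\ge3\pi$.

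The case $S=4\pi$ is the mirror image: now at most one angle can be $<\pi$, since two angles each below $\pi$ would make $S<4\pi$. If all three are $\ge\pi$ then $T=(2\pi-\al)+(2\pi-\be)+(2\pi-\ga)=6\pi-S=2\pi$ and \ref{alt2} holds, whereas if say $\al<\pi$ then $\be,\ga>\pi$ and a short computation gives $T=2\al\in(0,2\pi)$, so $T\ne2\pi$; here \ref{alt1} is impossible since it would force $S\le3\pi$. Finally I would glue the three cases together by noting that \ref{alt1} forces $S=2\pi$ and \ref{alt2} forces $S=4\pi$, so that in each regime exactly one alternative can compete and $T=2\pi$ precisely when it holds.

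The individual computations are entirely routine; the only genuine content, and the step I would treat most carefully, is the passage from ``$S$ is a multiple of $2\pi$'' to the exhaustive trichotomy $\{0,2\pi,4\pi\}$, together with the bookkeeping at the boundary value $\pi$ where $\ta=\pi$ under either formula. It is precisely this trichotomy that makes the betweenness analysis finite and forces the clean dichotomy between the two alternatives.
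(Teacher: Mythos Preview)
Your proof is correct. Both your argument and the paper's rest on the same key observation---Property~\ref{i}, that $S:=\oa_{a,b}+\oa_{b,c}+\oa_{c,a}\in\{0,2\pi,4\pi\}$---followed by case analysis. The organization, however, differs. The paper proves the ``if'' direction directly (using the constraint $T\in[0,3\pi]$ to select the right multiple) and then handles ``only if'' by contradiction: assuming $T=2\pi$ while neither \ref{alt1} nor \ref{alt2} holds, it reduces via cyclic symmetry and a WLOG ordering to three explicit cases, each leading to a contradiction. You instead make the trichotomy on $S$ the primary case split and, within each value of $S$, determine simultaneously whether $T=2\pi$ and whether \ref{alt1} or \ref{alt2} holds. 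Your organization is arguably more transparent: it avoids the contradiction setup and the WLOG reduction, and it makes the mirror symmetry between $S=2\pi$ and $S=4\pi$ explicit (your computations $T=4\pi-2\al$ and $T=2\al$ are visibly dual). The paper's route, on the other hand, separates the two logical directions cleanly and uses the bound $T\le3\pi$ as a quick selector, which keeps the ``if'' part very short. Both approaches are of comparable length and difficulty; the essential content is the same.
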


The two alternatives in Theorem~\ref{th:ta} are mutually exclusive. Indeed, if both \break 
alternatives \ref{alt1} and \ref{alt2} hold, then $\oa_{a,b}=\oa_{b,c}=\oa_{c,a}=\pi$, which contradicts Property~\ref{i} (stated on p.\ \pageref{i}).

That the two alternatives in Theorem~\ref{th:ta} are not quite the mirror images of each other reflects the asymmetry of the right-open interval $[0,2\pi)$, which is the range of values of oriented angles.


\begin{figure}[h]
	\centering 		\includegraphics[width=0.80\textwidth]{
	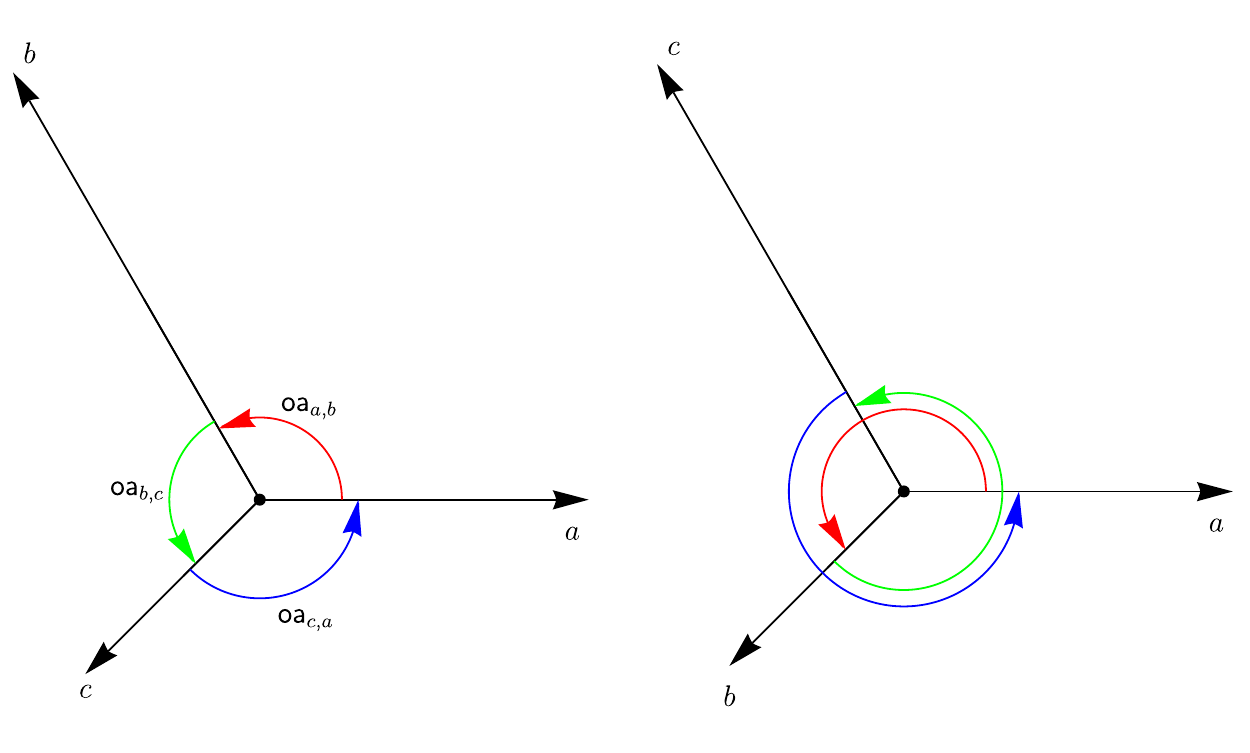}
	\caption{Two alternatives in the necessary and sufficient 
	condition in Theorem~\ref{th:ta}: \ref{alt1} (left) and \ref{alt2} (right).}
	\label{fig:pic}
\end{figure}

These two alternatives, \ref{alt1} and \ref{alt2}, are illustrated in Fig.~\ref{fig:pic}, where the oriented angles $\oa_{a,b}$, $\oa_{b,c}$, and $\oa_{c,a}$ are shown in red, green, and blue, respectively. 

The left picture in Fig.~\ref{fig:pic} illustrates alternative \ref{alt1} in Theorem~\ref{th:ta}. There all the oriented angles $\oa_{a,b},\oa_{b,c},\oa_{c,a}$ are in the interval $(0,\pi)$ and therefore coincide with the corresponding turning angles $\ta_{a,b},\ta_{b,c},\ta_{c,a}$, which latter thus ``obviously'' sum to $2\pi$.  

The right picture in Fig.~\ref{fig:pic} illustrates alternative \ref{alt2} in Theorem~\ref{th:ta}. There all the oriented angles $\oa_{a,b},\oa_{b,c},\oa_{c,a}$ are in the interval $(\pi,2\pi)$ and therefore the corresponding turning angles $\ta_{a,b},\ta_{b,c},\ta_{c,a}$ are $2\pi-\oa_{a,b},2\pi-\oa_{b,c},2\pi-\oa_{c,a}$. In this case, $\oa_{a,b}+\oa_{b,c}+\oa_{c,a}=4\pi$ and hence $\ta_{a,b}+\ta_{b,c}+\ta_{c,a}=3\times2\pi-4\pi=2\pi$.  In the right picture, the oriented angles $\oa_{a,b},\oa_{b,c},\oa_{c,a}$ ``overlap'' and therefore their labels are not shown, in contrast with the left picture.



\medskip
\hrule
\medskip

Let us precede the proof of Theorem~\ref{th:ta} by the following: 

\medskip

\noindent{\bf Properties} of oriented, turning, and usual angles:
\begin{enumerate}
[label=(\roman*)]
	\item\label{i} 
	$\oa_{a,b}+\oa_{b,c}+\oa_{c,a}\in\{0,2\pi,4\pi\}$. 
	\item\label{i'} 
	$\oa_{b,a}=2\pi-\oa_{a,b}$ -- unless $\oa_{a,b}=0$, in which case $\oa_{b,a}=\oa_{a,b}=0$. 
	\item \label{ii} $\ta_{a,b}=\ua_{a,b}\in[0,\pi]$. 
	\item \label{iii}  $\ta_{b,a}=\ta_{a,b}$. 
\end{enumerate} 

\begin{proof}[Proof of Property~\emph{\ref{i}}] Note that 
\begin{equation*}
	1=\frac ba\,\frac cb\,\frac ac
	=\frac{|b|}{|a|}e^{i\,\oa_{a,b}}\,\frac{|c|}{|b|}e^{i\,\oa_{c,b}}\,
	\frac{|a|}{|c|}e^{i\,\oa_{c,a}}=e^{i(\oa_{a,b}+\oa_{b,c}+\oa_{c,a})}. 
\end{equation*}
So, $\oa_{a,b}+\oa_{b,c}+\oa_{c,a}$ is an integer multiple of $2\pi$. It remains to recall that each of the values $\oa_{a,b},\oa_{b,c},\oa_{c,a}$ is in the interval $[0,2\pi)$. 
\end{proof}

\begin{proof}[Proof of Property~\emph{\ref{i'}}] This follows immediately from Definition~\ref{def:oa}. 
\end{proof}

\begin{proof}[Proof of Property~\emph{\ref{ii}}] 
Letting $\bar z$ denote the complex conjugate of a complex number $z$, we have  
\begin{equation*}
\cos\ua_{a,b}=	\frac{a\cdot b}{|a|\,|b|}=\frac{\Re(\bar a b)}{|a|\,|b|}
	=\frac{|a|^2\Re(b/a)}{|a|\,|b|}
	=\frac{|a|^2 \frac{|b|}{|a|} \cos\oa_{a,b} }{|a|\,|b|}
	=\cos\oa_{a,b}=\cos\ta_{a,b}.  
\end{equation*}
It remains to note that both $\ta_{b,a}$ and $\ua_{a,b}$ are in the interval $[0,\pi]$, which follows immediately from Definitions~\ref{def:ta} and \ref{def:ua}. 
\end{proof}

\begin{proof}[Proof of Property~\emph{\ref{iii}}] This follows immediately from Definition~\ref{def:ta} and Property~\ref{i'}.
\end{proof}

Now we can provide

\begin{proof}[Proof of Theorem~\ref{th:ta}] If alternative \ref{alt1} holds,  
then, by Definition~\ref{def:ta} and Property~\ref{i},  
\begin{equation*}
	\ta_{a,b}+\ta_{b,c}+\ta_{c,a}
	=\oa_{a,b}+\oa_{b,c}+\oa_{c,a}\in\{2\pi,4\pi\}.
\end{equation*}
By Property~\ref{ii}, $\ta_{a,b}+\ta_{b,c}+\ta_{c,a}\in[0,3\pi]$. So, $\ta_{a,b}+\ta_{b,c}+\ta_{c,a}=2\pi$. 

Similarly, if 
alternative \ref{alt2} holds, then   
\begin{equation*}
	\ta_{a,b}+\ta_{b,c}+\ta_{c,a}
	=3\times2\pi-(\oa_{a,b}+\oa_{b,c}+\oa_{c,a})\in\{6\pi,4\pi,2\pi\}.
\end{equation*}
Therefore and because 
$\ta_{a,b}+\ta_{b,c}+\ta_{c,a}\in[0,3\pi]$, we have  $\ta_{a,b}+\ta_{b,c}+\ta_{c,a}=2\pi$. 


This completes the proof of the ``if'' part of Theorem~\ref{th:ta}. 

To obtain a contradiction and thus prove the ``only if'' part of Theorem~\ref{th:ta}, suppose that 
$\ta_{a,b}+\ta_{b,c}+\ta_{c,a}=2\pi$ whereas 
neither alternative \ref{alt1} nor alternative \ref{alt2} holds. Then either  
\begin{itemize}
	\item $\max(\oa_{a,b},\oa_{b,c},\oa_{c,a})=0$ or 
	\item $\max(\oa_{a,b},\oa_{b,c},\oa_{c,a})>\pi$ and $\min(\oa_{a,b},\oa_{b,c},\oa_{c,a})<\pi$.
\end{itemize} 
By cyclic symmetry, without loss of generality (wlog) $\max(\oa_{a,b},\oa_{b,c},\oa_{c,a})=\oa_{c,a}$ and $\min(\oa_{a,b},\oa_{b,c},\oa_{c,a})=\oa_{a,b}$.
So, wlog one of the following three cases must occur: 
\begin{enumerate}
[label=\emph{Case}~\arabic*:]
	\item $\oa_{a,b}=\oa_{b,c}=\oa_{c,a}=0$;
	\item $\oa_{a,b}<\pi<\oa_{c,a}$ and $\oa_{b,c}\le\pi$;
	\item $\oa_{a,b}<\pi<\oa_{c,a}$ and $\oa_{b,c}>\pi$.
\end{enumerate}

In Case~1, we have $2\pi=\ta_{a,b}+\ta_{b,c}+\ta_{c,a}=\oa_{a,b}+\oa_{b,c}+\oa_{c,a}=0$, a contradiction. 

In Case~2, 
by Definition~\ref{def:ta}, $2\pi=\ta_{a,b}+\ta_{b,c}+\ta_{c,a}= 
\oa_{a,b}+\oa_{b,c}+2\pi-\oa_{c,a}$, so that $\oa_{a,b}+\oa_{b,c}=\oa_{c,a}$. So, by Property~\ref{i}, 
$2\oa_{c,a}=\oa_{a,b}+\oa_{b,c}+\oa_{c,a}\in\{0,2\pi,4\pi\}$, whence
$\oa_{c,a}\in\{0,\pi,2\pi\}$, which contradicts the condition $\pi<\oa_{c,a}$ of Case~1, since, by Definition~\ref{def:oa}, $\oa_{c,a}<2\pi$. 

Finally, in Case~3, 
again by Definition~\ref{def:ta}, $2\pi=\ta_{a,b}+\ta_{b,c}+\ta_{c,a}= 
\oa_{a,b}+2\pi-\oa_{b,c}+2\pi-\oa_{c,a}$, so that $\oa_{b,c}+\oa_{c,a}=\oa_{a,b}+2\pi$. Using now Property~\ref{i} again, we get $2\oa_{a,b}+2\pi=\oa_{a,b}+\oa_{b,c}+\oa_{c,a}\in\{0,2\pi,4\pi\}$. Therefore and because $\oa_{a,b}\in[0,2\pi)$, we get $\oa_{a,b}\in\{0,\pi\}$. So, by the Case~3 condition, $\oa_{a,b}=0$, whence $\oa_{b,c}+\oa_{c,a}=\oa_{a,b}+2\pi=2\pi$, which contradicts the Case~3 conditions on $\oa_{c,a}$ and $\oa_{b,c}$. 


This completes the proof of the ``only if'' part of Theorem~\ref{th:ta} as well. 

Thus, Theorem~\ref{th:ta} is proved. 
\end{proof}

Take now any pairwise distinct points $p_0,p_1,p_2$ in $\R^2$. For any integer $i$, let $p_i:=p_r$, where $r$ is the remainder of the division of $i$ by $3$, and consider the interior angle 
\begin{equation*}
	\al_i:=\ua_{p_{i-1}-p_i,\,p_{i+1}-p_i}
\end{equation*}
at vertex $p_i$ of the triangle $p_0 p_1 p_2$, with the usual angle $\ua_{a,b}$ as in Definition~\ref{def:ua}.

\begin{theorem}\label{th:}
$\al_0+\al_1+\al_2=\pi$. 
\end{theorem}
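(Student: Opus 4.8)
The plan is to reduce Theorem~\ref{th:} to Theorem~\ref{th:ta} applied to the three side vectors of the triangle. First I would introduce the edge vectors $e_0 := p_1 - p_0$, $e_1 := p_2 - p_1$, $e_2 := p_0 - p_2$; since the points are pairwise distinct these are nonzero, and they visibly satisfy $e_0 + e_1 + e_2 = 0$. The next step is to rewrite each interior angle as a turning angle between consecutive edge vectors. Using $p_{i+1} - p_i = e_i$ and $p_{i-1} - p_i = -e_{i-1}$, one gets $\al_0 = \ua_{-e_2, e_0}$, $\al_1 = \ua_{-e_0, e_1}$, and $\al_2 = \ua_{-e_1, e_2}$. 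Because $\cos\ua_{-a,b} = -\cos\ua_{a,b}$ and both usual angles lie in $[0,\pi]$, we have $\ua_{-a,b} = \pi - \ua_{a,b}$ for all nonzero $a,b$; combined with Property~\ref{ii} ($\ua_{a,b} = \ta_{a,b}$) this yields $\al_i = \pi - \ta_{\cdot,\cdot}$ for each $i$, so that
\[
\al_0 + \al_1 + \al_2 = 3\pi - (\ta_{e_0,e_1} + \ta_{e_1,e_2} + \ta_{e_2,e_0}).
\]
Thus it remains to show $\ta_{e_0,e_1} + \ta_{e_1,e_2} + \ta_{e_2,e_0} = 2\pi$.

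For this I would invoke Theorem~\ref{th:ta} with $(a,b,c) = (e_0,e_1,e_2)$, which reduces the task to verifying that one of the alternatives \ref{alt1}, \ref{alt2} holds for the oriented angles $x := \oa_{e_0,e_1}$, $y := \oa_{e_1,e_2}$, $z := \oa_{e_2,e_0}$. The key geometric input is the relation $e_0 + e_1 + e_2 = 0$: it forces the three direction rays of $e_0,e_1,e_2$ not to lie in any common open half-plane (otherwise all three vectors would have a strictly positive component along some direction and could not sum to zero). Equivalently, reading the directions in counterclockwise cyclic order, each of the three consecutive angular gaps is at most $\pi$. When the counterclockwise cyclic order of the directions is $(e_0,e_1,e_2)$, these gaps are exactly $x,y,z$, so $\max(x,y,z)\le\pi$ and alternative \ref{alt1} holds; when the order is reversed, the gaps are $2\pi-x,2\pi-y,2\pi-z$ (by Property~\ref{i'}), so $\min(x,y,z)\ge\pi$ and alternative \ref{alt2} holds.

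Finally I would dispose of the degenerate configurations, where the notion of cyclic order is ambiguous because two directions coincide or a pair is antipodal. If $e_0 + e_1 + e_2 = 0$ and two of the vectors are antipodal, the third must be parallel to them, so the only genuine degeneracy is the collinear case in which two directions coincide and the third is opposite; there the oriented angles are a permutation of $0,\pi,\pi$, for which $\max=\pi$ and alternative \ref{alt1} again holds. In every case one of the alternatives is satisfied, so Theorem~\ref{th:ta} gives $\ta_{e_0,e_1} + \ta_{e_1,e_2} + \ta_{e_2,e_0} = 2\pi$ and hence $\al_0 + \al_1 + \al_2 = \pi$. The main obstacle I anticipate is precisely this last verification that the sum-zero condition always selects one of the two alternatives: the supplementary-angle rewriting and the application of Theorem~\ref{th:ta} are routine, whereas translating $e_0 + e_1 + e_2 = 0$ into the half-plane/cyclic-order statement, and handling the collinear boundary cases cleanly, is where the real content lies.
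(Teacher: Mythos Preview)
Your proposal is correct and follows the same overall strategy as the paper: introduce the side vectors summing to zero, rewrite each $\al_i$ as $\pi$ minus a turning angle via the supplement identity $\ua_{-a,b}=\pi-\ua_{a,b}$ together with Property~\ref{ii}, and then invoke Theorem~\ref{th:ta} to finish. The one place where you and the paper diverge is in how the hypothesis of Theorem~\ref{th:ta} is verified. You argue geometrically: the relation $e_0+e_1+e_2=0$ prevents the three direction rays from lying in a common open half-plane, so the three angular gaps around the circle are each at most $\pi$; depending on the counterclockwise cyclic order of the directions this yields \ref{alt1} or \ref{alt2}, with a separate treatment of the collinear boundary case. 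The paper instead takes the imaginary part of $1+b/a+c/a=0$ to get $\tfrac{|b|}{|a|}\sin\oa_{a,b}=\tfrac{|c|}{|a|}\sin\oa_{c,a}$ (and cyclically), so $\sin\oa_{a,b},\sin\oa_{b,c},\sin\oa_{c,a}$ all have the same sign, which in one stroke places the three oriented angles on the same side of $\pi$; the remaining strict inequality $\max>0$ is dispatched by noting that otherwise $0=1+|b|/|a|+|c|/|a|>1$. The paper's route is shorter and needs no case split on cyclic order or degeneracies, while your half-plane argument is more geometric but requires the extra bookkeeping you yourself flagged; in either case the reduction to Theorem~\ref{th:ta} is the heart of the proof, and you have that exactly right.
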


\begin{proof}
Consider the nonzero vectors $a:=p_1-p_0,\quad b:=p_2-p_1,\quad c:=p_0-p_2$. Then $a+b+c=0$, so that 
\begin{equation*}
	0=\Im\Big(1+\frac ba+\frac ca\Big)
	=\frac{|b|}{|a|}\,\sin\oa_{a,b}-\frac{|c|}{|a|}\,\sin\oa_{c,a}. 
\end{equation*}
So, $\sin\oa_{a,b}$ and $\sin\oa_{c,a}$ (and, similarly, $\sin\oa_{b,c}$) are of the same sign. So, either $\max(\oa_{a,b},\oa_{b,c},\oa_{c,a})\le\pi$ or $\min(\oa_{a,b},\oa_{b,c},\oa_{c,a})\ge\pi$. Moreover, \break  $\max(\oa_{a,b},\oa_{b,c},\oa_{c,a})>0$, because otherwise $\oa_{a,b}=\oa_{b,c}=\oa_{c,a}=0$ and hence $0=1+\frac ba+\frac ca
=1+\frac{|b|}{|a|}+\frac{|c|}{|a|}>1$, a contradiction. So, one of the two alternatives in Theorem~\ref{th:ta}, \ref{alt1} or \ref{alt2}, holds. 
Using also Property~\ref{ii}, we conclude that 
\begin{equation*}
\ua_{a,b}+\ua_{b,c}+\ua_{c,a}=\ta_{a,b}+\ta_{b,c}+\ta_{c,a}=2\pi. 
\end{equation*}
It remains to note that $\al_0=\ua_{-c,a}=\pi-\ua_{c,a}$, $\al_1=\ua_{-a,b}=\pi-\ua_{a,b}$, and $\al_2=\ua_{-b,c}=\pi-\ua_{b,c}$, so that 
$\al_0+\al_1+\al_2=3\pi-(\ua_{a,b}+\ua_{b,c}+\ua_{c,a})=\pi$. 
\end{proof}

\section{Discussion}\label{discussion}

According to Theorem~\ref{th:ta}, a necessary and sufficient condition for 
\begin{equation}\label{eq:=2pi}
	\ta_{a,b}+\ta_{b,c}+\ta_{c,a}=2\pi 
\end{equation} 
to hold is 
a disjunction of conjunctions of inequalities for the oriented angles \break  $\oa_{a,b},\oa_{b,c},\oa_{c,a}$. Let us emphasize that Theorem~\ref{th:ta} holds for all nonzero vectors $a,b,c$, which do not have to be the ``side-vectors'' of a triangle. In particular, this is clearly seen in Fig.~\ref{fig:pic}, where the vectors $a,b,c$ can be of any nonzero lengths (so that e.g.\ $|c|>|a|+|b|$). So, the restriction that vectors $a,b,c$ be the ``side-vectors'' of a triangle is largely irrelevant.  

A very simple but crucial observation that is the main ingredient in the proof of Theorem~\ref{th:ta} is Property~\ref{i} (stated on p.\ \pageref{i}), which in a sense reduces the entire consideration to a choice among 
just a few integer multiples of $\pi$.

On the other hand, if -- 
as in the proof of Theorem~\ref{th:} -- $a,b,c$ \emph{are} the nonzero ``side-vectors'' of a triangle, so that $a+b+c=0$, then, as was shown at the end of that proof, the statement of Theorem~\ref{th:} is equivalent to \eqref{eq:=2pi}. It is also seen from the proof of Theorem~\ref{th:} that the ``triangle'' condition $a+b+c=0$ was used there to a rather limited extent -- just to obtain the desired inequalities in 
one of the alternatives \ref{alt1} or \ref{alt2} in Theorem~\ref{th:ta}. 

So, it appears that 
Theorem~\ref{th:} is 
mainly about \emph{inequalities} (between angles) or, in other words, about \emph{betweenness}. 

The notion of betweenness is the subject of ``Group II: Axioms of Order'' in Hilbert's \emph{The Foundations of Geometry} \cite{hilbert02}. However, concerning ``T\textsc{heorem}~19'' and ``T\textsc{heorem}~20. The sum of the angles of a triangle is two right angles.'' in that book, Hilbert only says ``we can then
easily establish [these] propositions''. 

As noted e.g.\ by Greenberg~\cite[p.~104]{greenberg08} concerning \cite{euclid}, ``Euclid never mentioned this notion [of betweenness] explicitly
but tacitly assumed certain facts about it that seem obvious in 
diagrams.'' 


However, 
the 6-line proof of the sum-of-angles theorem in 
\cite
{greenberg08} -- presented there as Proposition~4.11 -- also refers to a diagram, without mentioning any betweenness. 
Surprisingly, it appears that there is no published complete synthetic proof of this ancient theorem, with the betweenness relations explicitly and adequately addressed -- cf.\ the discussion at \url{https://mathoverflow.net/q/498127/36721}.

\bibliographystyle{abbrv}


\bibliography{C:/Users/ipinelis/Documents/pCloudSync/mtu_pCloud_02-02-17/bib_files/citations04-02-21}


\end{document}